\documentclass[12pt,a4paper]{amsart}
\usepackage[latin1]{inputenc}
\usepackage[english]{babel}
\usepackage{amsmath,amssymb,amsthm}

\usepackage{enumerate}
\usepackage{cite}

 \setlength{\parskip}{1ex}
 \textwidth 15cm
 \hoffset=-1cm

\theoremstyle{plain}
\newtheorem{theorem}{Theorem}[section]
\newtheorem{lemma}[theorem]{Lemma}
\newtheorem{corollary}[theorem]{Corollary}
\newtheorem{prop}[theorem]{Proposition}

\theoremstyle{definition}
\newtheorem{definition}[theorem]{Definition}
\newtheorem{example}[theorem]{Example}

\begin{document}

\title{A note on the luc-compactification of locally compact groups}

\keywords{F-space. $luc$-compactification. Stone-Cech compactification.}

\subjclass[2000]{ 22A15. }

\author[S. Jafar-Zadeh]{Safoura Jafar-Zadeh}

\address{Department of Mathematics,
342 Machray Hall, 186 Dysart Road,
University of Manitoba, Winnipeg, MB R3T 2N2, Canada.
} 
\email{\texttt{jsafoora@gmail.com}}

\begin{abstract}
 Let $G$ be a locally compact non-compact topological group. We show that $G^{luc}$ ($G^*$) is an F-space if and only if $G$ is a discrete group.
\end{abstract}

\maketitle

\begin{section}{Introduction}

 Let $G$ be a Hausdorff locally compact group and $C_b(G)$ denote the Banach space of bounded continuous functions on $G$ with the uniform norm. An element $f$ in $C_b(G)$ is called left uniformly continuous if $g\mapsto l_gf$, where $l_gf(x)=f(gx)$ is a continuous map from $G$ to $C_b(G)$. Let $LUC(G)$ denote the Banach space of left uniformly continuous functions with the uniform norm. It can be seen that $C_c(G)$ the space of continuous compactly supported functions on $G$ is a subspace of $LUC(G)$. The dual space of $LUC(G)$, $LUC(G)^*$, with the following Arens-type product forms a Banach algebra:
$$<m.n,f>=<m,nf>\ \ \ and \ \ \  < nf,g> = <n,l_g(f)>,$$

where $m,n\in LUC(G)^*$, $f\in LUC(G)$ and $g\in G.$ In fact this product is an extension to $LUC(G)^*$ of the convolution product on $M(G)$.\\

The $luc$-compactification of a locally compact group $G$, denoted by $G^{luc}$ can be regarded as a generalization to locally compact groups, of the familiar Stone-Cech compactification for discrete groups. As a topological space, $G^{luc}$ is the Gelfand spectrum of the unital commutative $C^*-$algebra of left uniformly continuous functions on $G$, i.e., $LUC(G)=C(G^{luc})$. The restriction of the product on $LUC(G)^*$ to the subset $G^{luc}$ of $LUC(G)^*$ turns $G^{luc}$ into a semigroup. This product in fact is an extension to $G^{luc}$ of the product on $G$. In case $G$ is a discrete group, since $LUC(G)=l^{\infty}(G)=C(\beta G)$, the $luc-$compactification of $G$ is just its Stone-cech compactification. The corona of the $luc-$compactification of $G$, $G^{luc}\setminus G$, is demoted by $G^*$ and is a closed subsemigroup of the compact semigroup $G^{luc}$.\\

F-spaces were studied in detail by L. Gillman and M. Henriksen in 1956 \cite{MR0078980} as the class of spaces for which $C(X)$ is a ring in which every finitely generated ideal is a principal ideal. Several conditions both topological and algebraic were proved equivalent for an F-space (see  \cite{MR0407579}).
\begin{definition}
A completely regular space $X$ is an F-space if and only if for any continuous bounded function $f$ on $X$ there is a continuous bounded function $k$ on $X$ such that $f=k |f|.$
\end{definition}
Many of the proofs in the case of discrete groups use the fact that for a discrete space the Stone-Cech compactification and its corona are an F-space. This is specially useful due to the following lemma. A proof is given in \cite{MR1297310} lemma 1.1.

\begin{lemma}\label{F-space}
If $X$ is a compact space then $X$ is an F-space if and only if for sigma-compact subsets $A$ and $B$ of $X$, $\bar{A}\cap B=\emptyset$ and $A\cap\bar{B}=\emptyset$ implies that $\bar{A}\cap\bar{B}\neq\emptyset.$
\end{lemma}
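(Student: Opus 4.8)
The plan is to prove the equivalence directly from the functional definition of an F-space, reading off cozero sets of a test function in one direction and building a test function out of the two $\sigma$-compact sets in the other. Before doing so I record that the separation condition must be read with $\bar A\cap\bar B=\emptyset$ in place of the printed $\bar A\cap\bar B\neq\emptyset$: otherwise the equivalence already fails for $X=\beta\mathbb{N}$, where the set $A$ of even and the set $B$ of odd natural numbers satisfy $\bar A\cap B=\emptyset$ and $A\cap\bar B=\emptyset$ yet have disjoint closures (no ultrafilter contains both $A$ and $B$), even though $\beta\mathbb{N}$ is an F-space. So the target I will establish is: a compact $X$ is an F-space if and only if every pair of $\sigma$-compact $A,B$ with $\bar A\cap B=\emptyset$ and $A\cap\bar B=\emptyset$ satisfies $\bar A\cap\bar B=\emptyset$.

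For the direction that assumes this separation property, I would take an arbitrary $f\in C_b(X)$ and put $A=\{f>0\}$, $B=\{f<0\}$. These are cozero sets, hence open $F_\sigma$ sets, hence $\sigma$-compact since $X$ is compact, and continuity of $f$ gives $\bar A\cap B=\emptyset=A\cap\bar B$ at once (a net in $A$ converging into $B$ would force $f\ge 0$ at a point where $f<0$). The hypothesis then forces $\bar A\cap\bar B=\emptyset$; since $X$ is compact Hausdorff, hence normal, Urysohn's lemma supplies a continuous $k\colon X\to[-1,1]$ with $k\equiv 1$ on $\bar A$ and $k\equiv -1$ on $\bar B$, and one checks $f=k|f|$ separately on $\{f>0\}$, on $\{f<0\}$, and on $\{f=0\}$. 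Hence $X$ is an F-space.

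For the converse I assume $X$ is an F-space and take $\sigma$-compact $A=\bigcup_n K_n$ and $B=\bigcup_n L_n$ (with $K_n,L_n$ compact) satisfying the two separation hypotheses. The idea is to manufacture one continuous function whose positive and negative parts absorb $A$ and $B$. For each $n$, normality and Urysohn's lemma give $g_n$ that is $1$ on $K_n$ and $0$ on the closed set $\bar B$; then $g=\sum_n 2^{-n}g_n$ is continuous with $g>0$ on $A$ and $g\equiv 0$ on $\bar B$, and symmetrically one produces $h\ge 0$ with $h>0$ on $B$ and $h\equiv 0$ on $\bar A$. Setting $f=g-h$ yields $A\subseteq\{f>0\}=:P$ and $B\subseteq\{f<0\}=:N$. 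Applying the definition to $f$ gives a continuous $k$ with $f=k|f|$; since $k\equiv 1$ on $P$ and $k\equiv -1$ on $N$, continuity propagates these values to $\bar P$ and $\bar N$, so $\bar P\cap\bar N=\emptyset$ and therefore $\bar A\cap\bar B\subseteq\bar P\cap\bar N=\emptyset$.

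The main obstacle is the function-building step in the converse. The $\sigma$-compactness hypotheses are exactly what let me cover $A$ and $B$ by countably many compact pieces on which Urysohn's lemma applies, and the weak separation hypotheses $\bar A\cap B=\emptyset=A\cap\bar B$ are precisely what make $g$ vanish on $\bar B$ and $h$ on $\bar A$, so that $f$ genuinely changes sign from $A$ to $B$. The point requiring care is that the countable sum $g$ stays continuous and strictly positive on all of $A$, not merely on each $K_n$; this is routine but is where the argument would break if the pieces were not compact.
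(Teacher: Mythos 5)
Your proof is correct, but there is no in-paper argument to compare it with: the paper does not prove this lemma at all, deferring instead to Lemma 1.1 of \cite{MR1297310}. So your proposal supplies a self-contained proof of a statement the paper only cites, and it does so from exactly the functional definition the paper records as Definition 1.1. Two specific points deserve emphasis. First, you are right that the printed conclusion $\bar{A}\cap\bar{B}\neq\emptyset$ is a typo for $\bar{A}\cap\bar{B}=\emptyset$; your $\beta\mathbb{N}$ counterexample with the evens and odds settles this decisively (both sets are $\sigma$-compact, satisfy the two separation hypotheses, and have disjoint closures, while $\beta\mathbb{N}$ is an F-space), and the corrected form is the one the paper implicitly uses afterwards, e.g.\ to conclude that compact subsets of $\beta X$ and the corona $X^*$ are F-spaces for discrete $X$. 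Second, both directions of your argument check out: in the forward direction the sets $\{f>0\}$ and $\{f<0\}$ are cozero, hence open $F_\sigma$, hence $\sigma$-compact in a compact space, the separation hypotheses apply, and Urysohn's lemma (via normality of a compact Hausdorff space) produces $k$ with $f=k|f|$ after the three-case verification; in the converse the Weierstrass M-test keeps $g=\sum_n 2^{-n}g_n$ continuous, the hypotheses $A\cap\bar{B}=\emptyset$ and $\bar{A}\cap B=\emptyset$ are used precisely where needed to get $K_n\cap\bar{B}=\emptyset$ and $L_n\cap\bar{A}=\emptyset$, and the observation that $k=\pm 1$ on $\{f>0\}$ and $\{f<0\}$ propagates to their closures (the sets $\{k=1\}$ and $\{k=-1\}$ are closed) gives $\bar{A}\cap\bar{B}\subseteq\bar{P}\cap\bar{N}=\emptyset$. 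The only hypothesis you use silently is that $X$ is Hausdorff, which is needed for normality; this is harmless, since the paper's Definition 1.1 already presupposes complete regularity, but it is worth stating explicitly.
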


It follows from lemma \ref{F-space} that:
\begin{lemma}
If $X$ is a discrete space, then every compact subset of $\beta X$ is an F- space. In particular $\beta X\setminus X$ is an F- space.
\end{lemma}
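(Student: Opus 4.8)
The plan is to reduce the statement to the single fact that $\beta X$ itself is an F-space, and then to transfer that property to an arbitrary compact subset using Lemma \ref{F-space}. First I would check directly from the definition that $\beta X$ is an F-space. Given $f\in C(\beta X)$, the function $x\mapsto f(x)/|f(x)|$ on $X$ (set equal to $1$ wherever $f$ vanishes) is bounded by $1$ and, since $X$ is discrete, automatically continuous; hence it extends to some $k\in C(\beta X)$. The identity $f=k|f|$ holds on the dense set $X$ and both sides are continuous, so it holds throughout $\beta X$, which is exactly the defining condition for an F-space. This is the one place where discreteness of $X$ is really used.

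Next I would transfer the property to a compact $K\subseteq\beta X$. The point to exploit is that $K$ is closed in $\beta X$, so for every $A\subseteq K$ one has $\mathrm{cl}_K A=\mathrm{cl}_{\beta X}A$ (because $\mathrm{cl}_{\beta X}A\subseteq\mathrm{cl}_{\beta X}K=K$), and a $\sigma$-compact subset of $K$ is still $\sigma$-compact in $\beta X$, compactness being absolute. Now take $\sigma$-compact $A,B\subseteq K$ with $\overline A\cap B=\emptyset$ and $A\cap\overline B=\emptyset$, the closures being taken in $K$. By the previous remark these are exactly the closures computed in $\beta X$, and since $\beta X$ is an F-space, the forward direction of Lemma \ref{F-space} gives $\overline A\cap\overline B=\emptyset$ in $\beta X$, hence in $K$. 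Applying the converse direction of Lemma \ref{F-space} to the compact space $K$ then shows that $K$ is an F-space. Finally, for the ``in particular'' clause I would note that a discrete $X$ is locally compact, so $X$ is open in $\beta X$; therefore $\beta X\setminus X$ is closed and hence compact, and the main assertion applies to it.

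I expect the only genuinely delicate step to be the transfer in the second paragraph, and more precisely the recognition that it is the closure formulation of Lemma \ref{F-space} that makes it go through. The more familiar characterization of F-spaces by ``disjoint cozero sets have disjoint closures'' does not restrict cleanly to a closed subspace, since two cozero sets of $K$ that are disjoint in $K$ arise as traces of cozero sets of $\beta X$ that need not be disjoint. By contrast, the hypotheses and conclusion of Lemma \ref{F-space} are phrased entirely in terms of closures of $\sigma$-compact sets, and these are computed identically in $K$ and in $\beta X$; verifying this coincidence of closures, together with the absoluteness of $\sigma$-compactness, is the careful part of the argument, after which the two directions of Lemma \ref{F-space} do all the work.
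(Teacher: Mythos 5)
Your proof is correct and follows the same route the paper intends: the paper offers no argument beyond the assertion that the lemma ``follows from Lemma \ref{F-space}'', and your write-up supplies exactly the missing details --- the direct verification that $\beta X$ is an F-space (via extension of the bounded function $f/|f|$ from the discrete, hence C*-embedded, set $X$) and the transfer to a closed-hence-compact subset using both directions of Lemma \ref{F-space}, together with the observation that closures and $\sigma$-compactness are absolute for closed subspaces. Note also that you correctly read the conclusion of Lemma \ref{F-space} as $\bar{A}\cap\bar{B}=\emptyset$, silently fixing the typo ($\neq\emptyset$) in the paper's statement of that lemma.
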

We observed that for a locally compact group the $luc-$compactification (it's corona) is an F-space if and only if the group is discrete.

\begin{definition}
A topological space $(X,\tau)$ is called metrizable if there is a metric $d:X\times X\to [0,\infty)$ such that the topology induced by $d$ is $\tau$.
\end{definition}
Any metrizable space is Hausdorff and first countable. Being Hausdorff and first countable is not always enough for a topological space to be metrizable. However, in case we are dealing with topological groups being Hausdorff and first countable turns out to be sufficient to be metrizable. This is the content of the Birkhoff-Kakutani theorem (see \cite{MR551496} theorem 8.5).
\begin{prop}
(Birkhoff-Kakutani) For a topological group $G$ the following three properties are equivalent:
\begin{enumerate}[(i)]
\item $G$ is first countable;\\
\item $G$ is metrizable;\\
\item there exists a left-invariant compatible metric on $G$.
\end{enumerate}
\end{prop}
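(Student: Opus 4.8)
The plan is to establish the three implications, observing at once that two of them are routine and that essentially the entire content of the theorem lies in $(i)\Rightarrow(iii)$. For $(iii)\Rightarrow(ii)$ there is nothing to prove: a left-invariant metric is in particular a metric, and a metric induces a metrizable topology by definition. For $(ii)\Rightarrow(i)$, if $d$ is a compatible metric then the balls $\{B_d(x,1/n):n\in\mathbb{N}\}$ form a countable neighbourhood base at each point $x$, so $G$ is first countable. Accordingly I would devote the whole argument to $(i)\Rightarrow(iii)$.

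For $(i)\Rightarrow(iii)$, the first step is to upgrade first countability to a well-behaved countable base at the identity. Starting from any countable base $\{U_n\}$ at $e$, I would use the continuity of $(x,y)\mapsto xy$ and $x\mapsto x^{-1}$ at the identity to construct recursively a decreasing sequence of symmetric open neighbourhoods $(V_n)_{n\ge 0}$ of $e$ with $V_0=G$, with $V_n\subseteq U_n$, and, crucially, with the contraction property $V_{n+1}V_{n+1}V_{n+1}\subseteq V_n$ for every $n$. Since $G$ is Hausdorff, these shrink to the identity, i.e. $\bigcap_n V_n=\{e\}$.

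The second step is to build a norm-like function. I would set $\rho(e)=0$, $\rho(x)=2^{-n}$ when $x\in V_n\setminus V_{n+1}$, and $\rho(x)=1$ otherwise; symmetry of the $V_n$ gives $\rho(x)=\rho(x^{-1})$. As $\rho$ need not satisfy the triangle inequality, I would pass to
$$
\|x\|=\inf\Bigl\{\textstyle\sum_{i=1}^{k}\rho(y_i): k\ge 1,\ y_1y_2\cdots y_k=x\Bigr\},
$$
the infimum ranging over all finite factorisations of $x$. By construction $\|\cdot\|$ is symmetric and subadditive, $\|xy\|\le\|x\|+\|y\|$, so that $d(x,y):=\|x^{-1}y\|$ is automatically a left-invariant pseudometric.

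The heart of the proof, and the step I expect to be the main obstacle, is the comparison estimate (essentially Frink's metrization lemma): one must show $\tfrac12\,\rho(x)\le\|x\|\le\rho(x)$ for all $x\in G$. The upper bound is trivial via the one-term factorisation, but the lower bound demands an induction on the length $k$ of the factorisation, splitting a chain $y_1\cdots y_k$ at the point where the partial $\rho$-sums first cross half the total and repeatedly invoking $V_{n+1}^3\subseteq V_n$ to keep the product inside the appropriate $V_n$. Granting this estimate, the lower bound forces $\|x\|>0$ for $x\ne e$ (using $\bigcap_n V_n=\{e\}$), so $d$ is a genuine metric, and the two-sided comparison shows that the $d$-balls and the sets $V_n$ generate the same neighbourhood filter at $e$. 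Left-invariance transports this to every point, so $d$ induces the original topology, which furnishes the desired compatible left-invariant metric and closes the cycle of implications.
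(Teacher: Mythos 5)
Your argument is correct, but note that the paper does not actually prove this proposition: it is quoted as a known classical result with a citation to Hewitt--Ross (Theorem 8.5), so there is no internal proof to compare against. What you have written is essentially the proof in that reference (and in Montgomery--Zippin, or Tao's exposition), resting on Frink's metrization lemma. Your reduction is the right one -- $(iii)\Rightarrow(ii)$ is vacuous, $(ii)\Rightarrow(i)$ is the usual ball argument, and all the content sits in $(i)\Rightarrow(iii)$ -- and the standard machinery you set up does close: the symmetric neighbourhoods with $V_{n+1}V_{n+1}V_{n+1}\subseteq V_n$, the length function $\rho$, the infimum $\|x\|$ over factorisations, and the two-sided estimate $\tfrac12\rho(x)\le\|x\|\le\rho(x)$, whose lower bound follows by induction on the factorisation length: splitting $y_1\cdots y_k$ as $a\,y_j\,c$ at the first crossing of half the total $\rho$-sum makes each of $a$, $y_j$, $c$ have $\rho$-value strictly below $2^{-(n+1)}$, hence lie in $V_{n+2}$, so the product lies in $V_{n+2}^3\subseteq V_{n+1}$. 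One point you should flag explicitly: as literally stated (for an arbitrary topological group) the proposition is false, since an indiscrete group is first countable but not metrizable. Your appeal to Hausdorffness to obtain $\bigcap_n V_n=\{e\}$, and hence positivity of $\|\cdot\|$ off the identity, is exactly where a separation axiom is indispensable; this is legitimate in context because the paper's standing convention, stated in its introduction, is that all groups considered are Hausdorff, but a self-contained proof should say so.
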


\end{section}
\begin{section}{Proof of the main result} Our main goal is to show that if $G$ is a locally compact non-discrete group then neither $G^{luc}$ nor $G^*$ are F-spaces. We first observe that in general a locally compact non-discrete topological space that contains a convergent sequence is not an F-space. To see this we recall the following lemma.

\begin{lemma}\label{topol}
Suppose that $X$ is a locally compact space and $K$ is a compact set in $X$ and  let $D$ be an open subset with $K\subseteq D$. Then there exists an open set $E$ with $\bar{E}$ compact such that $K\subseteq E\subseteq \bar{E}\subseteq D.$
\end{lemma}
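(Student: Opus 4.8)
The plan is to reduce the assertion to a local statement at each point of $K$ and then glue the pieces together using the compactness of $K$. Throughout I use that $X$ is Hausdorff (which holds in all the applications in this note, where $X$ is a subspace of a compact Hausdorff space); consequently $X$ is locally compact and regular, and I will also use the elementary fact that $\overline{A\cup B}=\bar A\cup\bar B$.

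The first and main step is the point-version of the lemma: for every $x\in D$ there is an open set $U_x$ with $x\in U_x$, with $\bar{U_x}$ compact, and with $\bar{U_x}\subseteq D$. To obtain this I would use local compactness to choose an open $N\ni x$ with $\bar N$ compact, and then replace $N$ by $N\cap D$ so that we may assume $N\subseteq D$. Now $\bar N$ is a compact Hausdorff, hence normal, space in which $\{x\}$ is closed and $N=N\cap\bar N$ is open. Applying normality (regularity) of $\bar N$ to the pair $\{x\}\subseteq N$ yields a set $W$, open in $\bar N$, with $x\in W$ and with closure taken in $\bar N$ contained in $N$. Writing $W=O\cap\bar N$ for some $O$ open in $X$ and setting $U_x:=O\cap N$, one checks that $U_x$ is open in $X$, contains $x$, and satisfies $U_x\subseteq W$; since $W\subseteq N\subseteq\bar N$, the closure of $U_x$ in $X$ coincides with its closure in $\bar N$, and therefore lies in $N\subseteq D$ and is compact.

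With the local statement in hand, the patching is routine: the family $\{U_x:x\in K\}$ is an open cover of the compact set $K$, so there is a finite subcover $U_{x_1},\dots,U_{x_n}$. Setting $E:=\bigcup_{i=1}^n U_{x_i}$, the set $E$ is open and contains $K$, while $\bar E=\bigcup_{i=1}^n\bar{U_{x_i}}$ is a finite union of compact subsets of $D$, hence a compact subset of $D$. This produces the required chain $K\subseteq E\subseteq\bar E\subseteq D$.

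I expect the main obstacle to be the point-version rather than the gluing: it is exactly there that local compactness and the Hausdorff (regularity) hypothesis are genuinely used, and the one delicate bookkeeping point is keeping track of the difference between closures computed in the subspace $\bar N$ and in the ambient space $X$, together with the verification that the neighborhood $U_x$ produced inside $\bar N$ is genuinely open in $X$ and still has its $X$-closure inside $D$.
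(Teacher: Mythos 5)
Your proof is correct: the point-version (local compactness plus regularity/normality of the compact Hausdorff set $\bar N$, with the careful bookkeeping distinguishing closures taken in $\bar N$ from closures taken in $X$) and the finite-subcover gluing both go through, and your explicit Hausdorff assumption is consistent with how the lemma is applied in the paper, where the ambient space is Hausdorff. Note that the paper itself states this lemma without any proof, merely recalling it as a standard fact; your argument is exactly the standard textbook one (it appears, for instance, as Theorem 2.7 in Rudin's \emph{Real and Complex Analysis}), so there is nothing in the paper for it to diverge from.
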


Note that  the Urysohn's lemma fails for $luc-$functions. An example is given below. 
\begin{example}
Consider the closed sets $\{(x,\frac{1}{x}),x\in\mathbb{R}\}$ and $\{(x,0),x\in\mathbb{R}\}$ in $\mathbb{R}^2$. It can be easily checked that there is no uniformly continuous function separating these closed sets.
\end{example}

\begin{theorem}
If $X$ is a Hausdorff locally compact non-discrete topological space (group)  with a nontrivial convergent sequence then $X$ is not an F-space.
\end{theorem}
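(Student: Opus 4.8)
The plan is to argue directly from the definition of an F-space: I will exhibit a single bounded continuous real-valued function $f$ on $X$ for which no continuous $k$ satisfying $f=k|f|$ can exist. First I would clean up the hypothesis. A nontrivial convergent sequence in a Hausdorff space must contain a subsequence of \emph{distinct} points $x_n\to x$ with $x_n\neq x$ for every $n$: any value $\neq x$ occurring infinitely often would contradict convergence (separate it from $x$ by disjoint neighbourhoods), so the sequence has infinitely many distinct terms, and the same Hausdorff separation makes each $x_n$ isolated in the set $S:=\{x_n:n\in\mathbb{N}\}\cup\{x\}$. Thus $S$ is a compact subset of $X$ homeomorphic to the standard convergent sequence with its limit.

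Next I would split the sequence into even and odd terms and define a function on $S$ by $f(x_{2n})=\tfrac{1}{n+1}$, $f(x_{2n+1})=-\tfrac{1}{n+1}$, and $f(x)=0$. This is continuous on $S$ with values in $[-1,1]$, the magnitudes tending to $0$ guaranteeing continuity at the limit point $x$. The only place local compactness is genuinely needed is to promote this to a globally defined bounded continuous function on $X$. Since $X$ is locally compact Hausdorff, its one-point compactification $X^{+}$ is compact Hausdorff, hence normal, and $S$ is closed in $X^{+}$; Tietze's extension theorem then extends $f$ to some $\tilde f\in C(X^{+},[-1,1])$, whose restriction to $X$ is the desired $f\in C_b(X)$. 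Alternatively one can produce a relatively compact open neighbourhood of $S$ via Lemma \ref{topol}, extend on its compact closure, and cut off.

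Finally, suppose for contradiction that $X$ is an F-space. Then there is $k\in C_b(X)$ with $f=k|f|$. At every point where $f\neq 0$ this forces $k=f/|f|=\operatorname{sign} f$, so $k(x_{2n})=+1$ and $k(x_{2n+1})=-1$ for all $n$. But $k$ is continuous and $x_{2n}\to x$, $x_{2n+1}\to x$, so the constant subsequences $\{k(x_{2n})\}$ and $\{k(x_{2n+1})\}$ must both converge to $k(x)$, yielding $k(x)=+1$ and $k(x)=-1$ simultaneously, which is absurd. Hence no such $k$ exists and $X$ is not an F-space.

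I expect the only real obstacle to be the extension step, namely guaranteeing that the alternating-sign function on the convergent sequence really does extend to a bounded continuous function on all of $X$; this is exactly where local compactness enters, through the normality of $X^{+}$ (or through Lemma \ref{topol}). I would also note that Lemma \ref{F-space} is \emph{not} directly applicable here, since $X$ need not be compact, and that the naive even/odd split of $S$ actually \emph{satisfies} rather than violates its separation criterion (the two closures meet only at $x$); the genuine obstruction lives in the sign discontinuity forced at the limit point, which is why the direct definition is the efficient tool.
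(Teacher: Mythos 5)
Your proof is correct, and it exploits the same underlying obstruction as the paper's proof: a bounded continuous $f$ that vanishes at the limit point but alternates in sign along the sequence forces any $k$ with $f=k|f|$ to equal $+1$ and $-1$ on two subsequences converging to the same point, which is incompatible with continuity of $k$. Where the two arguments genuinely part ways is in how such an $f$ is manufactured. The paper builds $f$ from the inside out: using Lemma \ref{topol} it inductively constructs nested compact neighbourhoods $K_n$ of the limit point with $x_n\notin K_{n+1}$, picks a convergent series $\sum a_n$ whose partial sums alternate in sign, and sums Urysohn functions $f_n$ supported in the $K_n$, so that $f$ takes the alternating partial-sum values on the shells $K_n\setminus K_{n+1}$ containing the points $x_n$. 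You instead prescribe the alternating values $\pm\frac{1}{n+1}$ only on the compact set $S=\{x_n:n\in\mathbb{N}\}\cup\{x\}$ and extend by Tietze's theorem applied on the one-point compactification (or on $\bar{E}$ from Lemma \ref{topol} with a cut-off). Your route is shorter, replaces the inductive construction by a single appeal to a standard extension theorem, and isolates the true mechanism --- sign alternation with $|f|\to 0$ along the sequence. In particular it makes visible that the paper's requirement that the series be non--absolutely convergent (and its closing claim that non-absolute convergence is why $k$ fails to be continuous) is inessential: alternation of the partial sums, forcing the sum to be $0$, is what does the work. What the paper's construction buys in exchange is self-containedness --- only Urysohn's lemma on compact sets, no Tietze theorem or compactification --- and a compactly supported $f$. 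Your preliminary reduction extracting a subsequence of \emph{distinct} points $x_n\neq x$, and your closing remark that Lemma \ref{F-space} is not applicable here because $X$ need not be compact, are details the paper passes over silently and are worth keeping.
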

\begin{proof}
Suppose that $X$ contains a non-trivial sequence $(x_n)$ convergent to a (non-isolated) point  $x_0$ in $X$. Since $x_0$ is not isolated and $G$ is locally compact we can construct inductively a nested family $\{K_n\}_{n\geq 2}$ of compact neighbourhoods around $x_0$ such that $x_n\notin K_{n+1}$ for each $n$. To see this note that since $x_1\neq x_0$ there is a precompact open set $U_0$ such that $x_0\in U_0$ and $x_1\not\in U_0$. Apply lemma \ref{topol} with $K:=\{x_0\}$ and $D:=U_0$ to get $K_2$. Without loss of generality we can assume that $(x_n)_{n\geq 2}\in K_2$. Suppose that the compact set $K_n$ is given such that $x_{n-1}\not\in K_n$. Since $x_n\neq x_0$ there is a pre compact open set $U_n$ such that $x_0\in U_n\subseteq K_n$ and $x_n\not\in U_n$.  Apply lemma \ref{topol} to get $K_{n+1}$ and note that $x_n\not\in K_{n+1}$.\\
 Consider a convergent series $\sum_{n=1}^{\infty}a_n$, where $\sum_{n=1}^m a_n\sum_{n=1}^{m+1} a_{n}<0$ that is not absolutely convergent. Using Urysohn's lemma define the compactly supported function $f_n$ such that  $f_n(X\setminus K_n)=0$ and $f_n(K_{n-1})=a_{n-1}$. Let $f=\sum f_n$. Then $f$ is a continuous (compactly supported) function on $X$. For any function $k$ where $f=k |f|$, $k$ is not continuos at $x_0$ since the series we chose is not absolutely convergent..  
\end{proof}

So to show that $G^{luc}$ ($G^*$) is not an F-space, it is enough to show that it contains a nontrivial convergent sequence. In case that $G$ is a metrizable non-discrete locally compact group, $G$ has a nontrivial convergent sequence, and we can use this sequence to build one in $G^{luc}$. But what if $G$ is not metrizable? The following theorem is due to Kakutani-Kodaira (see \cite{MR551496} theorem 8.7).

\begin{prop}
(Kakutani-Kodaira) Let $G$ be a $\sigma$-compact locally compact group. For any sequence $(U_n)_{n\geq 0}$ of neighbourhood of identity of $G$, there exists a compact normal subgroup $N$ of $G$ contained in $\cap_{n\geq 0}U_n$ such that $G/K$ is metrizable.
\end{prop}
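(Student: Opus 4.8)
The statement is the classical Kakutani--Kodaira theorem, and the plan is to reduce metrizability of the quotient to first countability via the Birkhoff--Kakutani proposition already stated, and then to build the desired subgroup by an induction that simultaneously controls size, group structure, and normality. Concretely, I would first fix an increasing sequence of compact sets $K_0\subseteq K_1\subseteq\cdots$ with $G=\bigcup_n K_n$, available since $G$ is $\sigma$-compact, and shrink the given neighbourhoods $U_n$ to precompact symmetric open sets if necessary.

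The heart of the argument is an inductive construction of symmetric open neighbourhoods $V_n$ of the identity with $\overline{V_0}$ compact and, for every $n$, (a) $V_n\subseteq U_n$, (b) $\overline{V_{n+1}}\subseteq V_n$, (c) $V_{n+1}V_{n+1}V_{n+1}\subseteq V_n$, and (d) $gV_{n+1}g^{-1}\subseteq V_n$ for all $g\in K_n$. Conditions (a)--(c) are the usual neighbourhood-base requirements and are arranged using local compactness together with the continuity of multiplication and inversion at $e$; condition (d) is the ingredient needed for normality, and it is obtained from the joint continuity of the conjugation map $(g,v)\mapsto gvg^{-1}$ together with the compactness of $K_n$, which allows one to pass from pointwise to uniform control over $g$.

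I would then set $N=\bigcap_n V_n=\bigcap_n\overline{V_n}$. As a closed subset of the compact set $\overline{V_0}$, $N$ is compact; condition (c) makes $N$ closed under multiplication and symmetry makes it closed under inversion, so $N$ is a subgroup; condition (d) together with $G=\bigcup_n K_n$ (so that each $g$ lies in some $K_n$) forces $gNg^{-1}\subseteq N$, hence $N$ is normal; and (a) gives $N\subseteq\bigcap_n U_n$. To see that $G/N$ is metrizable it suffices, by Birkhoff--Kakutani, to show that it is first countable, and I claim the images $\{\pi(V_n)\}$ under the open quotient map $\pi\colon G\to G/N$ form a countable neighbourhood base at the identity coset. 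Given a neighbourhood $W$ of $eN$, the set $\pi^{-1}(W)$ is open and contains the compact group $N=\bigcap_n\overline{V_n}$; since the $\overline{V_n}$ decrease to $N$, compactness yields $\overline{V_n}\subseteq\pi^{-1}(W)$ for some $n$, whence $\pi(V_n)\subseteq W$.

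The step I expect to be most delicate is securing normality, that is, condition (d): arranging conjugation-invariance of $V_{n+1}$ uniformly over all $g$ in the compact piece $K_n$ while simultaneously respecting the shrinking and containment conditions (a)--(c). It is precisely here that $\sigma$-compactness is indispensable, since without exhausting $G$ by the compact sets $K_n$ one could not convert the merely local continuity of conjugation into the global invariance that makes $N$ normal.
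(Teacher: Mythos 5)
The paper offers no proof of this proposition: it is quoted as a classical result of Kakutani and Kodaira, with a citation to Hewitt--Ross, Theorem 8.7, so there is nothing internal to compare against. Your argument is essentially the standard proof found in that reference, and it is correct: the inductive construction of symmetric neighbourhoods $V_n$ satisfying (a)--(d), with the conjugation condition (d) obtained by covering the compact set $K_n$ with finitely many neighbourhoods on which conjugation is controlled, is exactly how $\sigma$-compactness is converted into normality of $N=\bigcap_n V_n$; and your compactness argument that the images $\pi(V_n)$ form a countable neighbourhood base at the identity coset of $G/N$, followed by an appeal to Birkhoff--Kakutani, legitimately yields metrizability (the classical texts instead build a left-invariant pseudometric directly from the $V_n$, but first countability is all that is needed given the Birkhoff--Kakutani proposition the paper has already stated). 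Two small points to tighten. First, Birkhoff--Kakutani produces a metric only on a Hausdorff group, so you should note that $G/N$ is Hausdorff; this holds because $N$, being compact, is closed. Second, in the normality step, for $g\in K_m$ condition (d) only gives $gxg^{-1}\in V_n$ for $n\ge m$, so you need the observation that the $V_n$ are nested, whence $\bigcap_{n\ge m}V_n=\bigcap_n V_n=N$. Incidentally, your reading of the statement silently corrects a typo in the paper: the quotient written there as $G/K$ is of course $G/N$.
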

We have the following hereditary property (see \cite{MR551496} theorem 5.38 e):
\begin{lemma}
Let $G$ be a topological group and $H$ be a closed subgroup then if $H$ and $G/H$ are first countable so is $G$.
\end{lemma}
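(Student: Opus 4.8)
The plan is to reduce everything to finding a countable neighbourhood base at the identity $e$, since a topological group is homogeneous (each left translation is a homeomorphism), so first countability at $e$ propagates to every point. Write $\pi\colon G\to G/H$ for the quotient map, which is continuous and open, and recall that $G/H$ is a homogeneous coset space carrying, by hypothesis, a countable neighbourhood base $\{O_m\}_{m\ge 1}$ at the point $\dot e=eH$; put $P_m=\pi^{-1}(O_m)$. From first countability of $H$ I would first extract a decreasing local base $\{C_k\}$ at $e$ in $H$ of open symmetric sets, and then build a sequence $\{M_k\}_{k\ge 1}$ of open symmetric neighbourhoods of $e$ in $G$ enjoying two features: (a) $M_{k+1}^2\subseteq M_k$ (hence $M_k^2\subseteq M_{k-1}$), and (b) the traces $\{M_k\cap H\}$ form a local base at $e$ in $H$. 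Feature (a) is arranged inductively, using continuity of multiplication at $(e,e)$ to select at each stage a symmetric $R_{k+1}$ with $R_{k+1}^2\subseteq M_k$; feature (b) is arranged by also intersecting with an open subset of $G$ whose trace on $H$ is $C_{k+1}$, so that $M_k\cap H\subseteq C_k$ and the family $\{M_k\cap H\}$ refines the base $\{C_k\}$ and is therefore itself a base.

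The candidate countable local base at $e$ in $G$ is then $\{M_k\cap P_m\}_{k,m\ge 1}$, and the whole content lies in showing it is a base, i.e. that every open $U\ni e$ contains some $M_k\cap P_m$. The hard part will be the gluing of the ``$H$-direction'' controlled by the $M_k$ with the ``$G/H$-direction'' controlled by the $P_m$: the $M_k$ are small only along $H$ (only their traces shrink), while the $P_m$ are saturated, so neither family alone constrains a general point of $G$, and the naive intersection argument stalls because the coset decomposition of a point is not unique.

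To carry out the gluing, given $U$ I would choose a symmetric open $V\ni e$ with $V^2\subseteq U$, then pick $k$ with $M_{k-1}\cap H\subseteq V$, and then---this is the decisive choice---pick $m$ with $O_m\subseteq \pi(V\cap M_k)$, which is legitimate precisely because $\pi$ is open, so $\pi(V\cap M_k)$ is an open neighbourhood of $\dot e$. For $g\in M_k\cap P_m$ one then has $\pi(g)\in O_m\subseteq\pi(V\cap M_k)$, so there is a representative $t\in V\cap M_k$ with $\pi(t)=\pi(g)$; setting $h:=t^{-1}g$ gives $h\in H$, and since $t,g\in M_k$ with $M_k$ symmetric, $h\in M_k^2\subseteq M_{k-1}$, whence $h\in H\cap M_{k-1}\subseteq V$. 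Therefore $g=th\in V\cdot V\subseteq U$, which proves $M_k\cap P_m\subseteq U$. The step I expect to require the most care is exactly isolating the representative $t$ inside the small set $V\cap M_k$ rather than using an arbitrary coset representative, since it is this choice that simultaneously tames the coset direction (via $t\in V$) and the transverse $H$-direction (via $h\in M_k^2\cap H$); once that is in place the remainder is routine bookkeeping with the square conditions, and closedness of $H$ is not actually needed for the argument.
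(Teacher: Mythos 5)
Your proof is correct, and every step checks out: the inductive construction of the symmetric neighbourhoods $M_k$ with $M_{k+1}^2\subseteq M_k$ and traces refining a base of $H$ at $e$, the use of openness of $\pi$ to choose $O_m\subseteq\pi(V\cap M_k)$, and the decisive choice of the representative $t\in V\cap M_k$ (rather than an arbitrary one) all work exactly as you describe, and homogeneity finishes the job. Note, however, that the paper itself gives no proof of this lemma at all --- it is simply quoted from Hewitt--Ross (Theorem 5.38(e)) --- so there is nothing internal to compare against; your argument is essentially the standard one from that source, and your closing observations (that closedness of $H$ is never used, and that normality of $H$ is not needed since $G/H$ need only be a coset space with open quotient map) are both accurate.
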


In a $\sigma-$compact locally compact group $G$ the existence of a non-metrizable compact normal subgroup is the only drawback to the non-metrizability of $G$.
\begin{prop}
Let $G$ be a non-metrizable $\sigma-$compact locally compact group and $N$ a compact normal subgroup of $G$ such that $G/N$ is metrizable. Then $N$ is non-metrizable.
\end{prop}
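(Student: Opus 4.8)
The plan is to prove the contrapositive, using the Birkhoff--Kakutani theorem to translate every occurrence of ``metrizable'' into ``first countable'' and then invoking the hereditary property lemma stated above. The point is that metrizability of $N$, of $G/N$, and of $G$ are, for topological groups, each equivalent to first countability, and first countability is precisely the property that propagates along the extension $N \to G \to G/N$.

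First I would record the one topological fact needed to make the hereditary lemma applicable: since $N$ is a compact subgroup of the Hausdorff group $G$, it is automatically closed. Thus $N$ qualifies as the closed subgroup $H$ in the hereditary lemma, with quotient $G/N$.

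Now suppose, towards a contradiction, that $N$ \emph{is} metrizable. By Birkhoff--Kakutani, $N$ is then first countable. By hypothesis $G/N$ is metrizable, hence also first countable, again by Birkhoff--Kakutani. Since $N$ is a closed subgroup and both $N$ and $G/N$ are first countable, the hereditary property lemma gives that $G$ is first countable. Applying Birkhoff--Kakutani a final time, $G$ would be metrizable, contradicting the standing assumption that $G$ is non-metrizable. Therefore $N$ cannot be metrizable, which is the claim. (Note that $\sigma$-compactness and local compactness of $G$ are not actually consumed in this argument; they are part of the ambient setting from the preceding Kakutani--Kodaira discussion that guarantees such an $N$ exists in the first place.)

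I do not expect any genuine obstacle here: the entire statement reduces to a routine combination of the two imported results. The only step meriting a moment's care is verifying that $N$ is closed so that the hereditary lemma is legitimately invoked, and this is immediate from the compactness of $N$ inside a Hausdorff group.
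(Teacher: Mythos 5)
Your proof is correct and is precisely the argument the paper intends: the paper's proof is the one-line remark that the proposition ``follows from proposition 1.5 [Birkhoff--Kakutani] and lemma 2.5 [the hereditary property],'' which is exactly the combination you spell out. Your added observation that $N$ is closed (being compact in a Hausdorff group), so that the hereditary lemma applies, is a worthwhile detail the paper leaves implicit.
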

\begin{proof}
This follows from proposition 1.5 and lemma 2.5.
\end{proof}
If $G$ is a locally compact group it contains a subgroup $H$ that is $\sigma$-compact and clopen. This is the content of the next lemma.
\begin{lemma}
If $G$ is a locally compact non-discrete group then any symmetric pre compact neighbourhood of the identity of $G$ contains a subgroup $H$ that is $\sigma$-compact clopen and non-discrete.
\end{lemma}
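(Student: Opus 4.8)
The crux here is the direction of containment, so I would isolate it first. Read literally, ``the neighbourhood contains $H$'' means $H\subseteq V$; but then, $V$ being precompact and a clopen subgroup being in particular closed, $H$ would be a \emph{compact open} subgroup of $G$. Arbitrarily small compact open subgroups exist in the totally disconnected case (van Dantzig's theorem) but are absent from connected groups: in $\mathbb{R}$, say, the only clopen subgroup is $\mathbb{R}$ itself, which no precompact $V$ can contain. So a clopen non-discrete subgroup sitting inside an arbitrary $V$ need not exist, and the subgroup that a symmetric precompact neighbourhood canonically determines is the one it \emph{generates}. Consistently with the preceding remark that $G$ contains a $\sigma$-compact clopen subgroup, I therefore read the statement as asserting that $V$ generates such an $H$, so that $V\subseteq H$.

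With this reading the plan is the standard construction. Set $H=\bigcup_{n\ge 1}V^{\,n}$. Symmetry $V=V^{-1}$ gives closure under inverses and $V^{\,n}V^{\,m}=V^{\,n+m}$ closure under products, so $H$ is a subgroup. It contains the nonempty open set $\operatorname{int}(V)\ni e$, and a subgroup containing a nonempty open set is open, being a union of translates of that set; an open subgroup is also closed, its complement being a union of open cosets. Hence $H$ is clopen. It is non-discrete precisely because it is open: near $e$ it carries the subspace topology of $G$, in which $e$ is not isolated since $G$ is non-discrete.

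Finally I would verify $\sigma$-compactness. Let $K=\overline{V}$, which is compact since $V$ is precompact. As $V\subseteq H$ and $H$ is closed, $K\subseteq H$, so $K^{\,n}\subseteq H$ for every $n$; since $V^{\,n}\subseteq K^{\,n}$ this gives $H=\bigcup_{n\ge 1}K^{\,n}$, a countable union of compact sets, each $K^{\,n}$ being a continuous image of $K\times\cdots\times K$. The single genuine obstacle is the containment settled in the first paragraph; once the statement is read as $V\subseteq H$, the remaining verifications are routine and call on local compactness only through the precompactness of $V$.
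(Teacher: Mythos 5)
Your proposal is correct and follows essentially the same route as the paper, which likewise sets $H=\bigcup_{n\in\mathbb{N}}U^{n}$ for a precompact symmetric open neighbourhood $U$ and notes that $H$ is clopen, $\sigma$-compact, and non-discrete because it is open in the non-discrete group $G$. Your preliminary observation about the direction of containment is well taken --- the paper's own proof indeed constructs $H\supseteq U$ rather than $H\subseteq U$, confirming your reading --- and your verification of $\sigma$-compactness via $H=\bigcup_{n\ge 1}\overline{V}^{\,n}$ just makes explicit a step the paper leaves implicit.
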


\begin{proof}
Let $U$ be a precompact symmetric open neighbourhood of identity and form the subgroup $H=\cup_{n\in\mathbb{N}}U^n.$ Then $H$ is clopen and $\sigma$- compact. $H$ is non-discrete since it is open in $G$ and $G$ is a non-discrete topological group.
\end{proof}

Consider the group $\mathbb{Z}_2=\{0,1\}$ taken with the discrete topology. Let $\kappa$ be the product space $\mathbb{Z}_2^{\kappa}$ with the product topology. Then the space $D^{\kappa}:=\mathbb{Z}_2^{\kappa}$ is compact totally disconnected and perfect. In particular, $D^{\aleph_0}$ is a metrizable space. In fact every compact locally disconnected perfect metrizable space is homeomorphic to $D^{\aleph_0}$.\\
In 1952 P.S. Alexandroff announced that every compact metric space is a continuous image of the cantor set $\{0,1\}^{\aleph_0}$.
In 1958 Kuzminov \cite{MR0104753} showed in fact that any compact group is dyadic, i.e., a continuous image of a Cantor cube. 
\begin{prop}
Any compact group is dyadic.
\end{prop}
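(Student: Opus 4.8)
The plan is to produce a continuous surjection from a Cantor cube $D^{\kappa}=\{0,1\}^{\kappa}$ onto $G$; by definition this is exactly what dyadicity asks for. The strategy is to realise $G$ as a continuous well-ordered projective limit of compact Lie-group quotients with metrizable successive kernels, and then to transport dyadicity up this tower by transfinite induction, feeding in Alexandroff's theorem at the bottom of each step.

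First I would set up the tower. Using the Peter--Weyl theorem (or, in the $\sigma$-compact setting, iterating the Kakutani--Kodaira Proposition on a suitably chosen sequence of neighbourhoods of the identity), I construct a decreasing transfinite chain of closed normal subgroups $G=N_{0}\supseteq N_{1}\supseteq\cdots\supseteq N_{\tau}=\{e\}$, continuous at limit stages in the sense that $N_{\lambda}=\bigcap_{\alpha<\lambda}N_{\alpha}$, and such that each successive quotient $M_{\alpha}:=N_{\alpha}/N_{\alpha+1}$ is a compact Lie group. Insisting on Lie (not merely metrizable) kernels is cheap, since a finite-dimensional unitary representation of $G$ that is nontrivial on $N_{\alpha}$ realises $N_{\alpha}/N_{\alpha+1}$ as a closed subgroup of some $U(n)$. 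This yields $G=\varprojlim_{\alpha}G/N_{\alpha}$, where each bonding map $q_{\alpha}\colon G/N_{\alpha+1}\to G/N_{\alpha}$ is a quotient homomorphism of compact groups whose kernel is the compact metrizable group $M_{\alpha}$.

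Next, by transfinite induction I would build a coherent family of continuous surjections $f_{\alpha}\colon D^{\kappa_{\alpha}}\to G/N_{\alpha}$ compatible with the bonding maps. The base case is trivial since $G/N_{0}$ is a point, and limit stages are handled by passing to the inverse limit and invoking compactness to recover surjectivity. The heart of the argument is the successor step: given $f_{\alpha}$, extend it to a continuous surjection onto $G/N_{\alpha+1}$. Since $M_{\alpha}$ is compact metrizable, Alexandroff's theorem gives a continuous surjection $D^{\aleph_{0}}\to M_{\alpha}$ of the Cantor set onto the fibre; if $q_{\alpha}$ admitted a continuous global section $s$, one could simply set $(t,u)\mapsto s\bigl(f_{\alpha}(t)\bigr)\cdot g(u)$, with $g$ the fibre parametrisation, and be done.

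The main obstacle is precisely that $q_{\alpha}$ need not admit a global continuous section --- the double cover $SU(2)\to SO(3)$ already exhibits a quotient of compact groups with no section --- so the naive product splitting is unavailable. I would circumvent this by exploiting that $q_{\alpha}$ is an open map between compact spaces whose structure group $M_{\alpha}$ is a Lie group: a Gleason-type theorem on free Lie-group actions then furnishes local continuous cross-sections over an open cover of $G/N_{\alpha}$, finite by compactness. Shrinking this cover to a finite closed cover, pulling it back through $f_{\alpha}$ to a finite closed cover of $D^{\kappa_{\alpha}}$, and parametrising each fibre by the Cantor set, I assemble the desired surjection onto $G/N_{\alpha+1}$ piecewise; the finitely many pieces share a single Cantor-cube domain because $D^{\kappa}\cong D^{\kappa}\times D^{\aleph_{0}}$ surjects onto a finite disjoint union of copies of $D^{\kappa}$. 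Carrying the induction through to $\alpha=\tau$ produces a continuous surjection $D^{\kappa_{\tau}}\to G/N_{\tau}=G$, so $G$ is dyadic. The technical crux, demanding the most care, is exactly this gluing of local cross-sections into one globally defined continuous surjection at the successor stages.
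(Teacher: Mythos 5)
Your overall architecture --- a transfinite chain $G=N_0\supseteq\cdots\supseteq N_\tau=\{e\}$ with compact Lie successive kernels extracted from the Peter--Weyl theorem, Gleason's theorem making each $q_\alpha\colon G/N_{\alpha+1}\to G/N_\alpha$ a locally trivial principal $M_\alpha$-bundle, and a transfinite induction carrying a coherent family of surjections from Cantor cubes --- is the standard modern route to Kuzminov's theorem; note that the paper itself gives no proof at all (it simply cites Kuzminov), so your argument is the only one on the table. However, the step you yourself identify as the crux, the gluing at successor stages, has a genuine gap as written. You pull a finite \emph{closed} cover of $G/N_\alpha$ back through $f_\alpha$ and then treat the pieces as making up ``a finite disjoint union of copies of $D^{\kappa}$''. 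The sets $F_i=f_\alpha^{-1}(V_i)$ are merely closed subsets of $D^{\kappa_\alpha}$, and closed subspaces of Cantor cubes need not be Cantor cubes, nor even dyadic: $\beta\mathbb{N}$ is compact, zero-dimensional and of weight $\mathfrak{c}$, hence embeds as a closed subspace of $D^{\mathfrak{c}}$, yet it is not dyadic, because by the Katetov--Efimov proposition quoted in this very paper every non-isolated point of a dyadic space is the limit of a nontrivial sequence, while $\beta\mathbb{N}$ has no nontrivial convergent sequences. So in general no Cantor cube surjects onto your disjoint union of pieces, and the assembly fails. There is a second defect in the same step: even if some surjection $D^{\kappa_{\alpha+1}}\to\coprod_i F_i\times D^{\aleph_0}$ existed, the resulting $f_{\alpha+1}$ would have no reason to satisfy the coherence $q_\alpha\circ f_{\alpha+1}=f_\alpha\circ p_\alpha$ with the coordinate projection $p_\alpha$, since an abstract surjection onto a disjoint union does not respect the fibration over $D^{\kappa_\alpha}$; without coherence the limit stages collapse, because surjectivity there is exactly the compactness argument applied to a commuting ladder of inverse systems.

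Both defects are repaired by one observation: $D^{\kappa_\alpha}$ is compact and zero-dimensional, so the pulled-back \emph{open} cover $\{f_\alpha^{-1}(U_i)\}$ by trivializing sets can be refined by a finite partition of $D^{\kappa_\alpha}$ into pairwise disjoint clopen sets $C_1,\dots,C_n$, with $f_\alpha(C_i)\subseteq U_i$ after relabelling. Now define $f_{\alpha+1}$ on $D^{\kappa_{\alpha+1}}=D^{\kappa_\alpha}\times D^{\aleph_0}$ fibrewise: for $t\in C_i$ set $f_{\alpha+1}(t,u)=\phi_i^{-1}\bigl(f_\alpha(t),g(u)\bigr)$, where $\phi_i\colon q_\alpha^{-1}(U_i)\to U_i\times M_\alpha$ is the local trivialization and $g\colon D^{\aleph_0}\to M_\alpha$ is Alexandroff's surjection onto the compact metrizable fibre. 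Continuity is automatic because the sets $C_i\times D^{\aleph_0}$ are disjoint clopen sets covering the domain; surjectivity follows from that of $f_\alpha$ and $g$; and $q_\alpha\circ f_{\alpha+1}=f_\alpha\circ p_\alpha$ holds identically because each $\phi_i$ commutes with the bundle projection. This keeps the family coherent, so your limit stages (an inverse limit of Cantor cubes along coordinate projections is again a Cantor cube, and an inverse limit of surjections of nonempty compact Hausdorff spaces is surjective) go through and the induction closes. In short: replace ``pulled-back closed cover'' by ``clopen partition refining the pulled-back open cover''; the disjoint-union trick then becomes unnecessary, and with that change your proof is correct.
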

The following is an independent result of Katetov and Efimov \cite{MR0152987}. 
\begin{prop}
Every non-isolated point of a dyadic space is the limit of a sequence of distinct points.
\end{prop}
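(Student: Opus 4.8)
The plan is to realize $X$ as a continuous image of a Cantor cube and to manufacture the required sequence inside the cube, then push it forward. Write $X = f(D^{\kappa})$, where $f\colon D^{\kappa}\to X$ is a continuous surjection from $D^{\kappa}=\{0,1\}^{\kappa}$, and let $x_{0}$ be a non-isolated point of $X$. Set $F=f^{-1}(x_{0})$, a closed subset of $D^{\kappa}$. My first step is to observe that $F$ cannot be open: if it were, it would be clopen, so $D^{\kappa}\setminus F$ would be compact and its image $f(D^{\kappa}\setminus F)=X\setminus\{x_{0}\}$ would be closed, making $x_{0}$ isolated. Since $x_{0}$ is non-isolated (in particular $X\neq\{x_{0}\}$, so $F$ is a proper closed set that is not open, and a closed set is open exactly when its boundary is empty), the boundary $\partial F=F\cap\overline{D^{\kappa}\setminus F}$ is nonempty; fix $p\in\partial F$, so that $f(p)=x_{0}$ and $p$ lies in the closure of the nonempty open set $D^{\kappa}\setminus F$.

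The heart of the argument is to produce an honest sequence from $D^{\kappa}\setminus F$ converging to $p$. Here I would exploit two features of the Cantor cube: its topology is that of coordinatewise convergence, and the points differing from $p$ in only finitely many coordinates are dense. Recursively I build finite sets $S_{1}\subseteq S_{2}\subseteq\cdots\subseteq\kappa$ together with points $q_{n}\in D^{\kappa}\setminus F$ as follows. The basic clopen neighbourhood $[p|_{S_{n}}]=\{q : q|_{S_{n}}=p|_{S_{n}}\}$ meets the nonempty open set $D^{\kappa}\setminus F$ (because $p\in\overline{D^{\kappa}\setminus F}$), and by density of the finite modifications of $p$ I may choose $q_{n}$ in this intersection differing from $p$ only on a finite set $T_{n}$ with $T_{n}\cap S_{n}=\emptyset$; then put $S_{n+1}=S_{n}\cup T_{n}$.

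It remains to check convergence and to pass to distinct points. For a fixed coordinate $\alpha\in\kappa$, either $\alpha$ lies in some $T_{m}$, in which case $\alpha\in S_{m+1}$ and hence $q_{n}(\alpha)=p(\alpha)$ for all $n\geq m+1$, or $\alpha$ lies in no $T_{m}$, in which case $q_{n}(\alpha)=p(\alpha)$ for every $n$; either way $q_{n}(\alpha)$ is eventually equal to $p(\alpha)$, so $q_{n}\to p$ coordinatewise and thus in $D^{\kappa}$. Continuity gives $f(q_{n})\to f(p)=x_{0}$ while $f(q_{n})\neq x_{0}$ for all $n$. Finally, since $X$ is Hausdorff, no value $y\neq x_{0}$ can be attained by infinitely many of the $f(q_{n})$ (a constant subsequence would converge to $y\neq x_{0}$), so infinitely many of the $f(q_{n})$ are distinct; selecting these yields a sequence of distinct points converging to $x_{0}$.

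I expect the main obstacle to be precisely the step of converting a boundary point into a genuine sequential limit: for uncountable $\kappa$ the cube $D^{\kappa}$ is not first countable, so a point of $\overline{D^{\kappa}\setminus F}$ need not in general be the limit of any sequence from $D^{\kappa}\setminus F$. The device that rescues the argument is the finite-support density trick, which forces each difference set $T_{n}$ to be finite and, through the nesting $S_{n+1}=S_{n}\cup T_{n}$, locally finite in the sense that every coordinate is disturbed only finitely often; this is exactly the combinatorial condition guaranteeing coordinatewise convergence, and it is where the special structure of the Cantor cube (rather than an arbitrary compact space) is essential.
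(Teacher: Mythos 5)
Your proof is correct; the notable point of comparison is that the paper itself gives \emph{no} proof of this proposition at all --- it is stated as a known, independently obtained result of Katetov and Efimov, supported only by a citation. So your argument is not a variant of the paper's route; it supplies a complete elementary proof and would make the note self-contained at this point (leaving Kuzminov's theorem, that compact groups are dyadic, as the only remaining black box in the chain leading to the existence of non-trivial convergent sequences). Both pivots of your argument check out. The reduction to a boundary point of the fibre $F=f^{-1}(x_0)$ is sound: if $F$ were open, then $D^{\kappa}\setminus F$ would be compact, so $f(D^{\kappa}\setminus F)=X\setminus\{x_0\}$ would be closed --- this uses the Hausdorffness that is part of the definition of a dyadic space --- isolating $x_0$. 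The finite-support density trick is the genuine content, and you have it right: since each $q_n$ agrees with $p$ on $S_n$ and differs from $p$ only on the finite set $T_n$, which is then absorbed into $S_{n+1}$, every fixed coordinate $\alpha$ satisfies $q_n(\alpha)=p(\alpha)$ for all large $n$; coordinatewise convergence is exactly convergence in the product topology, so $q_n\to p$ despite the failure of first countability of $D^{\kappa}$ for uncountable $\kappa$. Continuity then gives $f(q_n)\to x_0$ with $f(q_n)\neq x_0$ for all $n$, and your final step (a value $y\neq x_0$ attained infinitely often would be a second limit of a subsequence, contradicting uniqueness of limits in a Hausdorff space) legitimately produces infinitely many distinct values and hence a subsequence of distinct points converging to $x_0$. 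Two trivial touch-ups, neither affecting correctness: initialize the recursion explicitly (say $S_1=\emptyset$), and note that each $T_n$ is nonempty because $q_n\notin F$ while $p\in F$, so the $T_n$ are in fact pairwise disjoint and the $q_n$ pairwise distinct in the cube --- though, as you correctly recognize, distinctness must ultimately be arranged in $X$ rather than in $D^{\kappa}$.
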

\begin{lemma}
If $G$ is a locally compact non discrete group then $G$ has a non-trivial convergent sequence.
\end{lemma}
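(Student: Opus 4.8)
The plan is to reduce to the $\sigma$-compact case and then split according to whether the relevant subgroup is metrizable. First I would invoke the preceding lemma to extract a clopen, $\sigma$-compact, non-discrete subgroup $H$ of $G$. The key point of this reduction is that $H$ is open in $G$, so any sequence that converges in $H$ also converges, to the same limit, in $G$, and remains non-trivial there. Hence it suffices to produce a non-trivial convergent sequence inside $H$, and from now on I would work entirely within the $\sigma$-compact locally compact non-discrete group $H$.

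If $H$ is metrizable, the argument is immediate. By Birkhoff-Kakutani, $H$ is first countable, and since $H$ is non-discrete it has a non-isolated point $x_0$. Choosing a decreasing countable neighbourhood base $(V_n)$ at $x_0$ and picking, at each stage, a point $x_n\in V_n\setminus\{x_0\}$ distinct from the finitely many previously chosen points (possible precisely because $x_0$ is non-isolated, so each $V_n$ meets $H\setminus\{x_0\}$), yields a sequence of distinct points converging to $x_0$. This is the desired non-trivial convergent sequence.

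The substantive case is when $H$ is non-metrizable; this is where the structure theory enters and is the part I expect to be the main obstacle. Here I would apply the Kakutani-Kodaira theorem to the $\sigma$-compact locally compact group $H$ to obtain a compact normal subgroup $N$ with $H/N$ metrizable. The preceding proposition then forces $N$ itself to be non-metrizable; in particular $N$ is infinite, and being an infinite compact Hausdorff space it must contain a non-isolated point (otherwise every point would be isolated, making $N$ discrete and compact, hence finite). Now the dyadicity of compact groups applies: $N$ is dyadic, so by the Katetov-Efimov result every non-isolated point of $N$ --- in particular the one just found --- is the limit of a sequence of distinct points of $N$. This sequence lies in $N\subseteq H\subseteq G$ and is a non-trivial convergent sequence, completing the proof.

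The delicate points I would watch are, first, verifying that $N$ is genuinely non-discrete (equivalently, infinite) so that Katetov-Efimov has a non-isolated point on which to act --- this is exactly what the non-metrizability proposition supplies --- and, second, ensuring that the limit produced is attained by genuinely distinct points, so that the resulting sequence is non-trivial rather than eventually constant. Both the metrizable and non-metrizable branches funnel into the same conclusion, and the open-subgroup reduction is what lets a single convergent sequence found in a small piece propagate to all of $G$.
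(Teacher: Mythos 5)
Your proof is correct and takes essentially the same route as the paper's: reduce to a clopen $\sigma$-compact non-discrete subgroup $H$, split on metrizability, and in the non-metrizable case combine Kakutani--Kodaira with the non-metrizability of $N$, Kuzminov's dyadicity of compact groups, and the Katetov--Efimov theorem. If anything, you are slightly more careful than the paper, which loosely asserts that \emph{every} point of $N$ is a limit of a non-trivial sequence, whereas you correctly note that Katetov--Efimov applies to the non-isolated point whose existence you first establish.
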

\begin{proof}
It is easy to see that if $G$ is a metrizable non-discrete locally compact group then $G$ has a non-trivial convergent sequence. If $G$ is a locally compact non-discrete group then $G$ has a subgroup $H$ that is sigma-compact, clopen and non-discrete. By Kakutani-Kodaira theorem there is a compact normal subgroup $N$ of $H$ such that $H/N$ is metrizable. If $H$ is metrizable then any point in $H$ is a limit point of a convergent sequence in $G$. Since $H$ is open in $G$ this sequence is also convergent in $G$. If $H$ is not metrizable then since $N/H$ is metrizable $N$ can't be metrizable otherwise $H$ would be metrizable. Since $N$ is not metrizable it can't be finite and so since it is compact it has a non-isolated point. By structure theorem of Kuzminov we know that $N$ is a continuous image of the Cantor cube $\{0,1\}^{\kappa}$ for some cardinal number $\kappa$. Now by Katetov and Efimov we have that every point in $N$ is the limit point of a non-trivial sequence. Any such nontrivial convergent sequence is also convergent in the open subgroup $H$ and therefore in $G$.
\end{proof} 

\begin{theorem}
If $G$ is a locally compact non-discrete group then neither $G^{luc}$ nor $G^*$ are F-spaces.
\end{theorem}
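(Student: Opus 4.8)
The plan is to reduce everything to the earlier Theorem of this section, which says that a Hausdorff locally compact non-discrete space carrying a nontrivial convergent sequence is not an F-space. Both $G^{luc}$ and $G^*$ are compact Hausdorff (the former is the spectrum of the commutative unital $C^*$-algebra $LUC(G)$, the latter a closed subspace), hence locally compact Hausdorff; so it suffices to exhibit in each of them a nontrivial convergent sequence, the limit of such a sequence being automatically non-isolated, which also forces the space to be non-discrete. By the preceding Lemma, $G$ itself has a nontrivial convergent sequence $(x_n)$; after a left translation I may assume $x_n\to e$ with the $x_n$ distinct and $\neq e$. For $G^{luc}$ there is nothing more to do: $G$ embeds as an open subspace of $G^{luc}$, so $(x_n)$ is already a nontrivial convergent sequence in $G^{luc}$ and the earlier Theorem applies.

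The work is all in the corona $G^*$, where the sequence $(x_n)$ no longer lives. Since $G$ is non-compact it contains a sequence $g_m\to\infty$ (eventually leaving every compact set; such a sequence is produced from the open $\sigma$-compact subgroup supplied earlier, according as its index in $G$ is finite or infinite), which I may and do take uniformly discrete as specified below. Fix a free ultrafilter $\mathcal U$ on the index set and set $q=\mathcal U\text{-}\lim_m g_m$ and $p_n=x_n q$. Since $g_m\to\infty$ we have $q\in G^*$, and each $p_n\in G^*$: if $x_nq$ were a point $h\in G$, then associativity of the semigroup product would give $q=x_n^{-1}(x_nq)=x_n^{-1}h\in G$, contradicting $q\in G^*$. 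The convergence $p_n\to q$ is immediate from left uniform continuity: for $f\in LUC(G)=C(G^{luc})$ one has $\langle x_nq,f\rangle=\langle q,l_{x_n}f\rangle$, and $\|l_{x_n}f-f\|\to 0$ as $x_n\to e$, so $\langle p_n,f\rangle\to\langle q,f\rangle$ for every $f$, i.e. $p_n\to q$ in $G^{luc}$, hence in $G^*$.

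The real point is to check that this convergent sequence is nontrivial, i.e. that infinitely many $p_n$ are distinct. In a Hausdorff space a convergent sequence with finite range is eventually equal to its limit, so it is enough to show $p_n\neq q$ for every $n$; then $(p_n)$ has infinite range and a subsequence of distinct terms still converges to $q$. To separate $p_n$ from $q$ I would build suitable functions in $LUC(G)$. First I arrange $(g_m)$ to be uniformly discrete: fix a symmetric compact neighbourhood $W_0$ of $e$ and a symmetric compact set $S$ absorbing $W_0^4$ and $CW_0$, where $C=\{x_n^{-1}:n\}\cup\{e\}$ is compact, and select the $g_m$ with $g_mg_j^{-1}\notin S$ for $m\neq j$, which is possible since $g_m\to\infty$. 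For each $n$ pick a bump $\phi_n\in C_c(G)\subseteq LUC(G)$ with $\phi_n(e)=1$, $0\le\phi_n\le 1$ and support in a small neighbourhood of $e$ (inside $W_0$) missing $x_n$, and put $f_n(x)=\sum_m\phi_n(xg_m^{-1})$. Uniform discreteness makes this sum locally a single bump, so $f_n\in LUC(G)$, with $f_n(g_m)=\phi_n(e)=1$ and $f_n(x_ng_m)=\phi_n(x_n)=0$ (the off-diagonal terms vanish because the $g_m$ are $S$-separated). Hence $\langle q,f_n\rangle=\mathcal U\text{-}\lim_m f_n(g_m)=1$ while $\langle p_n,f_n\rangle=\mathcal U\text{-}\lim_m f_n(x_ng_m)=0$, so $p_n\neq q$.

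The main obstacle is exactly this separation step. The convergence $p_n\to q$ is automatic from left uniform continuity, so the entire content lies in ruling out the degenerate possibility that the $p_n$ collapse onto $q$, and for this one must manufacture honest left-uniformly-continuous functions. The delicacy is that Urysohn's lemma fails for $luc$-functions (as the Example shows), so the separating functions cannot be produced abstractly; they must be assembled from $C_c(G)$ bumps placed along a uniformly discrete sequence, with a single compact separating set $S$ chosen to work simultaneously for all $n$. Once $p_n\neq q$ is established, the distinct subsequence of $(p_n)$ is a nontrivial convergent sequence in $G^*$, and the earlier Theorem shows that $G^*$ is not an F-space, completing the proof.
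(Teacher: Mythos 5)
Your proposal is correct, and its skeleton coincides with the paper's: get a nontrivial convergent sequence $(x_n)$ in $G$ from the preceding Lemma (this handles $G^{luc}$ directly, since $G$ is open in $G^{luc}$), then push it into the corona by right multiplication by a fixed element of $G^*$, using weak$^*$-continuity of $m\mapsto mz$ for the convergence, and finally apply the earlier Theorem. The difference lies entirely in how nontriviality of the translated sequence is secured, and here you do genuinely more work than the paper. The paper simply posits a \emph{right cancellable} element $z\in G^*$ and concludes that the points $x_nz$ are distinct; the existence of right cancellable elements in $G^*$ for a general locally compact non-compact group is a nontrivial fact which the paper neither proves nor cites, so its proof has an outsourced (arguably missing) step. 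You instead take $q$ to be an ultrafilter limit along a uniformly discrete sequence $g_m\to\infty$, and prove by hand exactly the weaker statement needed, namely $x_nq\neq q$ for all $n$: the functions $f_n(x)=\sum_m\phi_n(xg_m^{-1})$, with the single compact separating set $S\supseteq W_0^4\cup CW_0$ guaranteeing that the sum is locally a single bump (hence $f_n\in LUC(G)$) and that $f_n(g_m)=1$ while $f_n(x_ng_m)=0$, are a correct and careful way to do this given that Urysohn's lemma is unavailable for $luc$-functions. Your observation that $p_n\neq q$ for all $n$ already forces infinite range (a finite-range convergent sequence in a Hausdorff space is eventually its limit) cleanly avoids having to prove the $p_n$ pairwise distinct, i.e.\ you never need full cancellability. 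What your route buys is a self-contained proof that also makes explicit where non-compactness of $G$ enters (the sequence $g_m\to\infty$); what the paper's route buys is brevity, at the cost of leaning on an unproven existence statement. Two cosmetic points: the dichotomy producing $g_m\to\infty$ should be phrased as ``$H$ non-compact, or $H$ compact of infinite index'' (finite index with $H$ compact would make $G$ compact), and ``absorbing'' should read ``containing.''
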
 

\begin{proof}
Suppose that $G$ is a locally compact non-discrete group. Then $G$ contains a non-trivial sequence say $(x_n)$ convergent to $x$. Suppose that $z$ is a right cancellable element in $G^*$ then the nontrivial sequence $(x_nz)$ convergent to $xz$. So in both cases, they are not F-spaces.
\end{proof}

\begin{corollary}

If $G$ is a locally compact group and $H$ is a discrete group then if $G^*$ is homeomorphic to $H^*$, then $G$ must be discrete. 
\end{corollary}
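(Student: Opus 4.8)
The plan is to treat ``being an F-space'' as a topological invariant and then to play the two results already established against one another: for a discrete group the corona of the Stone--\v{C}ech compactification is an F-space, whereas for a locally compact non-discrete group the corona is never an F-space. Under the assumption that $G^*$ and $H^*$ are homeomorphic, the first fact forces $G^*$ to be an F-space, and the second then excludes the possibility that $G$ is non-discrete. Throughout I use the paper's standing hypothesis that $G$ is non-compact, so that $G$ sits as a proper open subset of the compact space $G^{luc}$ and $G^*=G^{luc}\setminus G$ is a genuine (nonempty) corona.

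First I would record that since $H$ is discrete we have $LUC(H)=\ell^\infty(H)=C(\beta H)$, so that $H^{luc}=\beta H$ and hence $H^*=\beta H\setminus H$. By the lemma asserting that $\beta X\setminus X$ is an F-space for every discrete space $X$, the space $H^*$ is an F-space. (Observe also that $H^*$ is compact, being closed in the compact space $\beta H$, which is consistent with $G^*$ being a compact corona.)

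Next I would verify the one genuinely conceptual point, namely that the F-space property is preserved under homeomorphism. This is immediate from Definition 1.3: a homeomorphism $\varphi\colon G^*\to H^*$ induces an isometric $*$-isomorphism $g\mapsto g\circ\varphi$ of $C_b(H^*)$ onto $C_b(G^*)$ which commutes with taking absolute values and with pointwise multiplication, so a factorization $f=k\,|f|$ on one space transports to the other. Complete regularity is likewise a topological property, and both $G^*$ and $H^*$ are completely regular as subspaces of the compact Hausdorff spaces $G^{luc}$ and $\beta H$. Consequently $G^*\cong H^*$ together with ``$H^*$ is an F-space'' yields that $G^*$ is an F-space.

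Finally I would apply the contrapositive of the preceding theorem: a locally compact non-discrete group has a corona that is not an F-space. Since $G^*$ has just been shown to be an F-space, $G$ cannot be non-discrete, and therefore $G$ is discrete, as claimed. The argument is short; its only delicate ingredient is the invariance of the F-space property under homeomorphism, and everything else is a direct appeal to the two results above.
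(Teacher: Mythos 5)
Your proof is correct and follows essentially the same route as the paper: $H^*$ is an F-space because $H$ is discrete, the homeomorphism transports the F-space property to $G^*$, and the main theorem (in contrapositive form) then forces $G$ to be discrete. The only difference is that you spell out the homeomorphism-invariance of the F-space property, which the paper treats as immediate.
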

\begin{proof}
Since $H$ is discrete $H^*$ is an F-space. Since $G^*$ is homeomorphic to $H^*$ it is also an F-space and therefore $G$ is discrete.
\end{proof}

\begin{corollary}
Let $G$ be a locally compact non-discrete group. Then $G^*$ is not discrete.
\end{corollary}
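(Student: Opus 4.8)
The plan is to argue by contradiction, leveraging the preceding theorem, which guarantees that $G^*$ is \emph{not} an F-space whenever $G$ is locally compact and non-discrete (recall that, under the standing assumption that $G$ is non-compact, $G^*=G^{luc}\setminus G$ is nonempty, so this conclusion has content). The whole corollary then reduces to the elementary fact that every discrete space is an F-space: if $G^*$ were discrete it would be an F-space, contradicting the theorem.

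Accordingly, the key step I would carry out is to check that an arbitrary discrete space $D$ satisfies the defining property of an F-space. A discrete space is completely regular, and every function on $D$ is continuous. Hence, given a bounded continuous $f$ on $D$, I would set $k(x)=f(x)/|f(x)|$ at each point where $f(x)\neq 0$ and $k(x)=1$ elsewhere; then $k$ is continuous, $\|k\|_\infty\le 1$, and $f=k|f|$ pointwise, which is exactly the condition in the definition of an F-space. Combining this with the preceding theorem immediately yields that $G^*$ cannot be discrete.

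I do not anticipate a genuine obstacle: the only point needing care is the verification that discreteness forces the F-space property, and this is settled by the sign-function construction above. For completeness I would note an alternative route that bypasses the F-space language altogether and instead produces a non-isolated point of $G^*$ directly, paralleling the proof of the preceding theorem. Starting from a nontrivial convergent sequence $(x_n)\to x$ in $G$ (which exists because $G$ is locally compact non-discrete) and a right cancellable element $z\in G^*$, the translated sequence $(x_nz)$ lies in $G^*$ and converges to $xz\in G^*$; right cancellability of $z$ keeps its terms distinct and distinct from the limit, so $xz$ is a non-isolated point of $G^*$. In this variant the facts to justify are that left translation by an element of $G$ carries $G^*$ into itself (because such a translation is a homeomorphism of $G^{luc}$ fixing $G$ setwise) and that right cancellable elements of $G^*$ exist, both of which are already used in establishing the preceding theorem.
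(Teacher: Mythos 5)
Your proof is correct and is essentially the paper's own argument: the paper also deduces the corollary in one line from the preceding theorem that $G^*$ is not an F-space, with the fact that discrete spaces are F-spaces left implicit (which you verify explicitly via the sign-function construction). Your alternative route via a translated convergent sequence is a fine bonus but is not needed; the main argument matches the paper.
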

\begin{proof}
Since $G^*$ is not an $F-$space  the result follows.
\end{proof}

\begin{definition}
A point in a topological space is called a P-point if every $G_{\delta}-$set containing the point is a neighbourhood of the point.
\end{definition}

\begin{corollary}
Suppose that $G$ is a locally compact non-discrete group. Then $G^*$ doesn't contain any $P-$point.
\end{corollary}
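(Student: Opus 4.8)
The plan is to deduce the corollary from two ingredients: the elementary observation that a P-point is never the limit of a nontrivial convergent sequence, and the fact -- in the spirit of the proof of the main theorem -- that the semigroup structure lets us attach to every point of $G^*$ a sequence from $G$ converging to it.

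First I would prove: \emph{if $X$ is Hausdorff and $p\in X$ is a P-point, then no sequence $(y_n)$ with $y_n\neq p$ for all $n$ converges to $p$.} Given such a sequence, use the Hausdorff property to choose for each $n$ an open $V_n\ni p$ with $y_n\notin V_n$; then $W=\bigcap_n V_n$ is a $G_\delta$ containing $p$, hence a neighbourhood of $p$ by the P-point assumption, so $W$ contains an open $O\ni p$. As $y_n\to p$, the set $O$ contains $y_n$ for all large $n$, contradicting $y_n\notin W\supseteq O$. Consequently it suffices to show that every $p\in G^*$ is the limit of a sequence lying in $G^*\setminus\{p\}$.

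Fix $p\in G^*$ and a nontrivial sequence $t_n\to e$ in $G$ with $t_n\neq e$, which exists because $G$ is locally compact and non-discrete. Since the product on $LUC(G)^*$ satisfies $\langle m\cdot n,f\rangle=\langle m,nf\rangle$ with $nf\in LUC(G)$, the map $m\mapsto m\cdot p$ is weak*-continuous, so its restriction to $G^{luc}$ is continuous and $t_np\to ep=p$. Each $t_np$ lies in $G^*$: if $t_np=g\in G$ then $p=t_n^{-1}(t_np)=t_n^{-1}g\in G$, which is impossible. Thus $(t_np)$ is a sequence in $G^*$ converging to $p$, and by the previous step $p$ fails to be a P-point provided $t_np\neq p$ for infinitely many $n$.

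The heart of the matter is exactly this nontriviality. We have $t_np=p$ if and only if $t_n$ belongs to the set $S=\{t\in G:tp=p\}$, which is a closed subgroup of $G$ (it is the preimage of $\{p\}$ under the continuous map $t\mapsto tp$, and associativity makes it a subgroup). If $S$ is not a neighbourhood of $e$ one can select the $t_n$ outside $S$ and conclude. The delicate case is when $S$ is open: then every sequence $t_n\to e$ eventually enters $S$ and the perturbation stalls, so one is forced to rule out points of $G^*$ fixed by an open (hence non-discrete) subgroup. I expect this to be the main obstacle. For such a point $p$, the invariance $p(l_vf)=p(f)$ for all $v\in S$ combined with multiplicativity $p(f^2)=p(f)^2$ is highly restrictive; for $G=\mathbb{R}$ it already fails, since $p(\sin(\cdot+r))=p(\sin)$ for all $r$ forces $p(\sin)=p(\cos)=0$ against $p(\sin)^2+p(\cos)^2=p(1)=1$. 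Carrying this out in general requires extracting enough almost periodic functions (equivalently a nontrivial finite-dimensional representation, for instance of the compact non-metrizable subgroup $N$ produced earlier) to separate an orbit, and dealing with groups that carry few such functions; establishing the non-existence of these invariant points is the step I anticipate to be genuinely hard, after which every $p\in G^*$ admits an escaping sequence and the corollary follows.
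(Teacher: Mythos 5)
Your proof has the same two-step skeleton as the paper's: (i) in a Hausdorff space a P-point cannot be the limit of a sequence of points different from it, and (ii) every $p\in G^*$ is such a limit, produced by translating $p$ by a nontrivial null sequence $(t_n)$ of $G$. Step (i) is correct and is exactly the paper's argument with the family $\{U_n\}$; your verifications that $m\mapsto m\cdot p$ is weak$^*$ continuous, hence $t_np\to p$, and that $t_np\in G^*$, are also correct. But the gap you flag at the end is genuine, and the route you propose for closing it does not work. The problem is to show $t_np\neq p$, i.e.\ that the stabilizer $S=\{t\in G: tp=p\}$ can be avoided. Your almost-periodicity idea fails in general: for a minimally almost periodic group such as $\mathrm{SL}_2(\mathbb{R})$ the only almost periodic functions are the constants, so there is nothing with which to separate an orbit; and even your ``easy'' case ($S$ not open) is incomplete in a non-metrizable group, since knowing that every neighbourhood of $e$ meets $G\setminus S$ does not by itself produce a \emph{convergent sequence} outside $S$ (the paper's Lemma 2.10 manufactures its sequence inside a compact subgroup, which could a priori be contained in $S$).

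What fills the hole is a known and genuinely nontrivial theorem that you are in effect trying to rediscover: Veech's theorem, which asserts that the natural left action of a locally compact group on its greatest ambit $G^{luc}$ is free, so $tp\neq p$ for every $t\neq e$ and every $p\in G^{luc}$. With that citation your argument is complete: $S=\{e\}$, any nontrivial null sequence works, and $(t_np)$ is a nontrivial sequence in $G^*$ converging to $p$. It is worth noting that the paper is no more careful on this point than you are: its proof simply asserts that each $p\in G^*$ admits a nontrivial convergent sequence (necessarily meaning one converging to $p$), and the device used in its Theorem 2.11 --- a right cancellable $z\in G^*$ --- only yields limits of the special form $xz$, not an arbitrary point of $G^*$. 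So your proposal correctly isolates the real difficulty, which the paper glosses over, but as written it is incomplete; it becomes a correct proof once Veech's theorem is invoked in place of the almost-periodic-function argument.
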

\begin{proof}
For each point $p$ in $G^*$ there is a non-trivial convergent sequence $(x_n)$. Let $\{U_n\}$ be a family of open sets of $p$ such that for each $n$, $x_1,x_2,...,x_n\not\in U_n$. Then for each $n$, $x_n\not\in\cap_m U_m$ so $\cap_m U_m$ can't be open and so $p$ is not a $P-$point.
\end{proof}
If $G$ is a discrete group then under the continuum hypothesis the set of $P-$points in $G^*$ forms a dense subset in $G^*$ (see the remark on page 385 of \cite{MR1377702}).
\begin{corollary}
If $G$ is a locally compact group $G^{luc}\setminus G$ has a $P-$point if and only if $G$ is discrete.
\end{corollary}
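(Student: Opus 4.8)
The plan is to establish the two implications of the biconditional separately, since they rest on quite different ingredients.

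For the implication that a $P$-point in $G^*$ forces $G$ to be discrete, I would argue by contraposition using the corollary immediately preceding this one. That corollary asserts that whenever $G$ is a locally compact \emph{non}-discrete group, $G^*$ contains no $P$-point at all: for each $p\in G^*$ one produces a nontrivial convergent sequence $(x_n)$ together with open neighbourhoods $U_n$ of $p$ omitting $x_1,\dots,x_n$, so that $\bigcap_m U_m$ is a $G_\delta$ containing $p$ but meeting no $x_n$, hence failing to be a neighbourhood of $p$. Contrapositively, if $G^*$ possesses even one $P$-point then $G$ cannot be non-discrete, i.e.\ $G$ is discrete. This is the routine half of the argument.

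For the converse, assume $G$ is discrete. Then $LUC(G)=l^{\infty}(G)=C(\beta G)$, so the $luc$-compactification coincides with the Stone-Cech compactification, $G^{luc}=\beta G$ and $G^*=\beta G\setminus G$. It therefore suffices to exhibit a $P$-point of $\beta G\setminus G$ for $G$ an infinite discrete group, and here I would invoke the classical result recorded in the remark preceding the statement (\cite{MR1377702}, p.\ 385): under the continuum hypothesis the $P$-points of $G^*$ form a \emph{dense} subset of $G^*$, so in particular $G^*$ contains a $P$-point.

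The main obstacle — and the point that deserves an explicit caveat — is precisely this converse direction. The existence of $P$-points in $\beta\mathbb{N}\setminus\mathbb{N}$ (equivalently in $G^*$ for countable discrete $G$) is \emph{not} provable in ZFC; by a theorem of Shelah it is consistent that no $P$-points exist. Thus the ``if'' half genuinely requires an additional set-theoretic hypothesis such as CH, whereas the ``only if'' half is a theorem of ZFC depending only on the previous corollary. The cleanest honest formulation is that, under CH (or any hypothesis furnishing $P$-points of $\beta\mathbb{N}\setminus\mathbb{N}$), the corollary holds as stated; I would make this dependence explicit rather than leave the biconditional looking like an unconditional result.
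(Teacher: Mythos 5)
Your proof is correct and is precisely the argument the paper intends (the paper states this corollary without any proof of its own): the ``only if'' direction is the contrapositive of the immediately preceding corollary, and the ``if'' direction identifies $G^{luc}$ with $\beta G$ for discrete $G$ and invokes the cited result that under CH the $P$-points are dense in $G^*$. Your caveat is well taken and worth keeping: since the existence of $P$-points in $\beta\mathbb{N}\setminus\mathbb{N}$ is independent of ZFC (by Shelah's theorem), the ``if'' half genuinely requires CH or some comparable hypothesis, a dependence that the paper's unqualified biconditional obscures; note also that one must read ``discrete'' as ``infinite discrete'' (as you implicitly do), since for finite $G$ the corona $G^*$ is empty and has no $P$-point at all.
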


\end{section}

\nocite{*}
\bibliographystyle{plain}

\bibliography{mybib}

\end{document}